 \font\smallit=cmti10
\renewcommand{\@seccntformat}[1]{\csname the#1\endcsname. }
 \newtheorem{definition}{Definition}
 \newtheorem{theorem}{Theorem}
 \newtheorem{lemma}{Lemma}
 \newtheorem{corollary}{Corollary}
 \newtheorem{example}{Example}
 \newenvironment{solution}{\begin{proof}[Solution]}{\end{proof}}
\begin{document}
\begin{center}
 {\bf A Solution of Sierpinski Problem Based m}
 \vskip 30pt

 {\bf Chi Zhang}\\
 {\smallit Academy of Mathematics and Systems Science, University of Chinese Academy of Sciences, Beijing 100049, People's Republic of China}\\

 \vskip 10pt

 {\tt zhangchi171@mails.ucas.ac.cn}\\

 \end{center}
 \vskip 30pt

\centerline{\bf Abstract} In 1960, W. Sierpinski proved that there are infinitely many positive odd numbers $k$, such that for any positive integer $n$, $k\times2^n+1$ is a composite number. Such numbers are called “Sierpinski numbers”. In this study, by using covering systems and the theory of cyclotomic polynomials, the following theorem is proved: for any integer $m>1$, there are infinitely many integers $k$ satisfying $k\not\equiv-1\!\pmod{q}$ for any prime number $q|(m-1)$, such that for any positive integer $n$, $km^n+1$ is a composite number. These positive integers $k$ are called “Sierpinski numbers based $m$”. The theorem can be regarded as a generalization of Sierpinski problem.

{\bf Key words:} Sierpinski number, Covering system, Cyclotomic polynomial.

\noindent

\pagestyle{myheadings}

 \thispagestyle{empty}
 \baselineskip=12.875pt
 \vskip 20pt

\section{Introduction}
In 1934, P. Erd\"os[1] raised the concept of covering systems when he proved that any integer congruent to 2036812 modulo 5592405 and 3 modulo 62 can not be the sum of a power of two and a prime. The definition of covering systems is given as follows.
\begin{definition}
If every integer satisfies at least one congruence in
\begin{equation}\label{eq-1}
x\equiv{a_1}\!\!\!\!\!\pmod{n_1},\quad x\equiv{a_2}\!\!\!\!\!\pmod{n_2}\quad,\dots,\quad x\equiv{a_k}\!\!\!\!\!\pmod{n_k}.
\end{equation}
Then \eqref{eq-1} is called “covering system” and it can be denoted as
\begin{equation}\nonumber
A=\{a_s (n_s)\}_{s=1}^{k}=\{a_1 (n_1),a_2 (n_2),\dots,a_k (n_k)\}.
\end{equation}
\end{definition}
Some problems on number theory can be solved by using covering systems. In 1956, H. Riesel[2] proved that there are infinitely many positive odd numbers $k$, such that for any positive integer $n$, $k\times2^n-1$ is a composite number. Then, In 1960, W. Sierpinski[3] proved that there are infinitely many positive odd numbers $k$, such that for any positive integer $n$, $k\times2^n+1$ is a composite number. Such numbers are called “Sierpinski numbers”. For example, 78557 is a Sierpinski number and it may be the minimum Sierpinski number. $78557\times2^n+1$ is always divisible by one of 3,5,7,13,19,37,73.\par
Inspired by G. Xungui's study[4], this study tries to generalize Sierpinski problem. Similar conclusion still holds if the number 2 is replaced by any other integer $m>2$. Notice that for any prime number $q|(m-1)$, if we take integer $k$ such that $k\equiv-1\!\pmod{q}$, then $km^n+1\equiv0\!\pmod{q}$ for any $n\in\mathbb{N}$. These conditions are trivial. Excluding the trivial conditions, we proved the following theorem.
\begin{theorem}\label{th-1}
For any integer $m>1$, there are infinitely many integers $k$ satisfying $k\not\equiv-1\!\pmod{q}$ for any prime number $q|(m-1)$, such that for any positive integer $n$, $km^n+1$ is a composite number.
\end{theorem}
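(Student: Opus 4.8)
The plan is to reduce the statement to Sierpinski's original mechanism: produce a finite set of primes $p_1,\dots,p_r$ and a pairing with residue classes so that \emph{every} integer $n\ge1$ forces one of the $p_s$ to divide $km^n+1$. Once this is arranged for $k$ lying in a suitable residue class, all sufficiently large $k$ in that class give composite values. The two ingredients are a covering system, which distributes the exponents $n$ among finitely many classes, and the theory of cyclotomic polynomials, which supplies the primes attached to those classes for an arbitrary base $m$.

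First I would fix a covering system $A=\{a_s\,(n_s)\}_{s=1}^{r}$ with every modulus $n_s\ge2$ (the classical Sierpinski covering with moduli $2,3,4,8,12,24$ is the natural first candidate) and attach to each class a prime $p_s$. The guiding observation is that if $\mathrm{ord}_{p_s}(m)=n_s$, then $m^n\bmod p_s$ depends only on $n\bmod n_s$; hence for every $n\equiv a_s\pmod{n_s}$ one has $m^n\equiv m^{a_s}\pmod{p_s}$, so that the single congruence
\[
k\equiv -m^{-a_s}\pmod{p_s}
\]
already forces $p_s\mid km^n+1$ for all such $n$. To realize a prime of order exactly $n_s$ I would take $p_s$ to be a \emph{primitive prime divisor} of $\Phi_{n_s}(m)$, i.e.\ a prime dividing $\Phi_{n_s}(m)$ but no $m^e-1$ with $e<n_s$. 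Two bookkeeping advantages follow: since the order of $m$ modulo $p_s$ singles out $p_s$, distinct moduli automatically produce distinct primes, so the Chinese Remainder Theorem applies; and since every order is $n_s\ge2$, no $p_s$ can divide $m-1$ (a prime $q\mid m-1$ has $\mathrm{ord}_q(m)=1$), so each $p_s$ is coprime to every prime $q\mid(m-1)$.

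With the primes in hand I would assemble $k$ by the Chinese Remainder Theorem, solving $k\equiv -m^{-a_s}\pmod{p_s}$ for all $s$ at once and, on top of this, imposing $k\not\equiv-1\pmod q$ for each prime $q\mid(m-1)$. These extra conditions are compatible with the others because each $q$ is coprime to all the $p_s$ and modulo $q$ we merely forbid a single residue; the result is a nonempty union of residue classes modulo $\bigl(\prod_s p_s\bigr)\cdot\prod_{q\mid(m-1)}q$, hence infinitely many admissible $k$. For any such $k$ the covering property places every $n$ in some class $a_s\,(n_s)$, whence $p_s\mid km^n+1$; and once $k\ge\max_s p_s$ we have $km^n+1>p_s$, so the value is a proper multiple of $p_s$ and therefore composite. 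This settles the theorem for every base $m$ with $m+1$ not a power of $2$, a range for which the classical covering works verbatim (including $m=2$, since $\Phi_6$ is never used and $m+1=3$ is not a power of $2$).

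The main obstacle, and the only place where the base $m$ genuinely intervenes, is guaranteeing the primitive prime divisors $p_s$ uniformly in $m$. This is exactly the content of the Bang--Zsygmondy theorem: $\Phi_d(m)$ has a primitive prime divisor for every $d$ and every $m\ge2$ \emph{except} $d=1$ with $m=2$, $d=6$ with $m=2$, and $d=2$ with $m+1$ a power of $2$. For the Sierpinski moduli the sole dangerous index is $d=2$, which fails precisely for the infinite family $m=2^{j}-1$. I therefore expect the real work to be twofold: verifying that the chosen covering avoids the bad indices $d=1,6$, and supplying a substitute for the exceptional bases $m=2^{j}-1$ — most cleanly by fixing a covering system with distinct moduli all $\ge3$ (so that Zsygmondy applies without exception, and $\mathrm{ord}_{p_s}(m)\ge3$ again keeps every $p_s$ coprime to $m-1$). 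Carrying out this exceptional case while preserving the non-triviality condition $k\not\equiv-1\pmod q$ is the delicate step; everything else is the routine covering-plus-CRT bookkeeping sketched above.
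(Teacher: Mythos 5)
Your overall architecture matches the paper's: split off the bases with $m+1$ a power of $2$, handle generic bases with a covering system whose smallest modulus is $2$, and handle the exceptional family with a covering system avoiding modulus $2$ entirely. Your generic case is complete and correct: taking $p_s$ to be a primitive prime divisor of $\Phi_{n_s}(m)$ gives $\mathrm{ord}_{p_s}(m)=n_s$, which simultaneously yields distinctness of the $p_s$, coprimality with $m-1$, and the divisibility $p_s\mid km^n+1$ on the class $a_s\,(n_s)$, after which CRT finishes. The only difference from the paper here is that you import Bang--Zsigmondy as a black box, whereas the paper proves weaker but sufficient facts from scratch (its Lemmas 2--6: $\Phi_{n_i}(m)$ has a prime factor coprime to $n_i$, and any such choices are automatically distinct and prime to $m-1$); that difference is stylistic, not a correctness issue.

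The genuine gap is the exceptional case $m=2^j-1$, which you describe but do not carry out. Everything there hinges on exhibiting a covering system with \emph{distinct} moduli, all at least $3$; you say one should ``fix'' such a covering but never produce or cite one. This is not routine bookkeeping: covering systems with least modulus $3$ are the nontrivial combinatorial ingredient of the whole argument, and the paper spends most of its Lemma \ref{le-1} constructing and verifying one, namely $\{2(4),4(8),8(16),8(24),0(48),1(3),5(6),3(12),1(5),7(10),3(15),9(20),15(30)\}$, with moduli $3,4,5,6,8,10,12,15,16,20,24,30,48$. Such systems do exist in the literature (Erd\H{o}s knew examples), so the gap is fillable, but as written your proof is incomplete for the infinitely many bases $m=2^j-1$. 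A secondary point: the step you single out as delicate --- preserving $k\not\equiv-1\pmod{q}$ in the exceptional case --- is in fact automatic from your own order argument, since moduli $\geq3$ force $\mathrm{ord}_{p_s}(m)\geq3$, hence $p_s\nmid m-1$, making the forbidden classes modulo each $q\mid(m-1)$ independent of the CRT conditions; the delicate step is exactly the covering construction you deferred.
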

This theorem may be known since there is a website[5] which focuses on the problem of minimum Sierpinski number based $m$.
\section{Proof of Theorem \ref{th-1}}
First, we introduce two covering systems that we will use in the proof of Theorem \ref{th-1}.\par
\begin{lemma}\label{le-1}
$\{0(2),0(3),1(4),5(6),7(12)\}$ and $\{2(4),4(8),8(16),8(24),0(48),1(3),\\5(6),3(12),1(5),7(10),3(15),9(20),15(30)\}$ are both covering systems.
\begin{proof}
It is obvious that \{0(2),0(3),1(4),5(6),7(12)\} is a covering system by verifying one by one from 1 to 12.\par
0(16) can be split into 16(48), 32(48), 0(48). They are covered by 1(3), 8(24), 0(48) respectively. So, 0(16) is covered by 1(3), 8(24), 0(48). Therefore, 0(2) is covered by 2(4), 4(8), 8(16), 1(3), 8(24), 0(48).\par
9(12) can be split into 21(60), 57(60), 33(60), 9(60), 45(60). They are covered by 1(5), 7(10), 3(15), 9(20), 15(30) respectively. So, 9(12) is covered by 1(5), 7(10), 3(15), 9(20), 15(30). Therefore, 1(2) is covered by 1(3), 5(6), 3(12), 1(5), 7(10), 3(15), 9(20), 15(30).\par
Hence \{(2(4),4(8),8(16),8(24),0(48),1(3),5(6),3(12),1(5),7(10),3(15),9(20),15(30)\} is a covering system.
\end{proof}
\end{lemma}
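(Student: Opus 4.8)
The plan is to verify the covering property of each set directly, but to organize the second verification so as to avoid an impractically large case check. Recall that $a(n)$ abbreviates the congruence $x\equiv a\pmod n$, and that a finite set of such congruences is a covering system precisely when every residue class modulo the least common multiple $L$ of the moduli is contained in at least one of them. For the first set $\{0(2),0(3),1(4),5(6),7(12)\}$ the moduli are $2,3,4,6,12$, so $L=12$ and the claim reduces to a finite check: I would run through the twelve residues $0,1,\dots,11$ and exhibit a covering congruence for each (for instance $0,2,4,6,8,10$ are caught by $0(2)$; $3,9$ by $0(3)$; $1,5$ by $1(4)$; $11$ by $5(6)$; and $7$ by $7(12)$). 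This part is routine.

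For the second set the least common multiple of the moduli is $240$, so a naive residue-by-residue check is too long to be illuminating, and the real work is to structure the argument hierarchically. First I would split by parity and dispose of the even integers using the $2$-adic valuation: the congruences $2(4)$, $4(8)$, $8(16)$ capture exactly the even $x$ with $v_2(x)=1,2,3$, so the only even residues not yet covered form the single class $0(16)$ (those with $v_2(x)\ge 4$). It then suffices to cover $0(16)$, and here I would refine modulo $48$: the class $0(16)$ decomposes as $0(48),16(48),32(48)$, and I would check that these three fall respectively under $0(48)$, then $1(3)$ (since $16\equiv1\pmod3$), and $8(24)$ (since $32\equiv8\pmod{24}$).

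For the odd integers I would argue analogously by refining the modulus a second time. The congruences $1(3)$, $5(6)$, $3(12)$ already cover every odd residue modulo $12$ except the single class $9(12)$, so the task reduces once more to covering $9(12)$. I would split this class modulo $60$ into $9(60),21(60),33(60),45(60),57(60)$ and match each sub-class to one of the remaining congruences: $9(60)$ to $9(20)$, $21(60)$ to $1(5)$, $33(60)$ to $3(15)$, $45(60)$ to $15(30)$, and $57(60)$ to $7(10)$, each verification being a one-line congruence check. Combining the even and odd analyses shows that every integer lies in at least one congruence, so the second set is a covering system.

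The main obstacle is not any single computation but the bookkeeping: the ambient modulus $240$ is large enough that one must locate the correct intermediate moduli — $16$, then $48$, on the even side; $12$, then $60$, on the odd side — at which the leftover classes become visible and split cleanly. Choosing these nested moduli so that each residual class $0(16)$ and $9(12)$ breaks into exactly the pieces matched by the mixed-modulus congruences involving the primes $3$ and $5$ is the crux of the argument; once they are identified, the remaining checks are purely mechanical.
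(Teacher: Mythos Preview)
Your proposal is correct and follows essentially the same route as the paper: a direct residue check modulo $12$ for the first system, and for the second a parity split in which the even side is reduced to the leftover class $0(16)$ and then refined modulo $48$ against $0(48)$, $1(3)$, $8(24)$, while the odd side is reduced via $1(3),5(6),3(12)$ to the leftover class $9(12)$ and then refined modulo $60$ against $1(5),7(10),3(15),9(20),15(30)$. The matchings you list agree with the paper's, so there is nothing to add.
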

Next, we introduce some lemmas about cyclotomic polynomials.
\begin{lemma}\label{le-2}
Let $\Phi_n (x)$ be the $n^{\rm th}$ cyclotomic polynomial. For any prime number $p$, if $(p,n)=1$, then $\Phi_{n}(x^{p^k})=\Phi_{n}(x^{p^{k-1}}) \Phi_{p^{k}n}(x)$; if $p|n$, then $\Phi_{p^{k}n}(x)=\Phi_{n}(x^{p^k})$.
\begin{proof}
For any primitive $p^{k}n^{\rm th}$ root of unity $\omega$, $\omega^{p^k}$ is a primitive $n^{\rm th}$ root of unity. Thus all roots of $\Phi_{p^{k}n}(x)$ are roots of $\Phi_{n}(x^{p^k})$.\par
If $ p|n$, then
\begin{equation}\nonumber 
\deg\Big(\Phi_{p^{k}n}(x)\Big)=\varphi(p^{k}n)=p^k\varphi(n)=\deg\Big(\Phi_{n}(x^{p^k})\Big).
\end{equation}
Since both $\Phi_{p^{k}n}(x)$ and $\Phi_n(x^{p^k})$ are monic polynomials, we have $\Phi_{p^{k}n}(x)=\Phi_n(x^{p^k})$.\par
If $(p,n)=1$, then for any primitive $n^{\rm th}$ root of unity $\omega_1$, $\omega_1^p$ is also a primitive $n^{\rm th}$ root of unity. Thus all roots of $\Phi_{n}(x^{p^{k-1}})\Phi_{p^{k}n}(x)$ are roots of $\Phi_{n}(x^{p^k})$. Since
\begin{equation}\nonumber
\begin{split} 
\deg\Big(\Phi_{n}(x^{p^{k-1}})\Phi_{p^{k}n}(x)\Big)
&=p^{k-1}\varphi(n)+\varphi(p^{k}n)\\
&=p^{k-1}\varphi(n)+(p^k-p^{k-1})\varphi(n)\\
&=p^k\varphi(n)=\deg\Big(\Phi_{n}(x^{p^k})\Big)
\end{split}
\end{equation}
and both $\Phi_{n}(x^{p^{k-1}})\Phi_{p^{k}n}(x)$ and $\Phi_{n}(x^{p^k})$ are monic polynomials. We have \\$\Phi_{n}(x^{p^k})=\Phi_{n}(x^{p^{k-1}})\Phi_{p^{k}n}(x)$.
\end{proof}
\end{lemma}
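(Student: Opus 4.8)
The plan is to prove both identities by the same device: regard each side as a monic polynomial over $\mathbb{C}$ and establish equality by showing that one side divides the other and that the two sides have equal degree. Throughout I will use the factorization $\Phi_m(x)=\prod_{\mathrm{ord}(\omega)=m}(x-\omega)$ into distinct linear factors (so every cyclotomic polynomial is monic and separable), the order formula $\mathrm{ord}(\omega^{t})=\mathrm{ord}(\omega)/\gcd(\mathrm{ord}(\omega),t)$, and the elementary totient identities $\varphi(p^{k}n)=p^{k}\varphi(n)$ when $p\mid n$ and $\varphi(p^{k}n)=(p^{k}-p^{k-1})\varphi(n)$ when $(p,n)=1$.

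First I would analyze the polynomial $\Phi_{n}(x^{p^{k}})$ on its own. A number $\omega$ is a root precisely when $\omega^{p^{k}}$ is a primitive $n$th root of unity, and since each primitive $n$th root $\zeta$ has exactly $p^{k}$ distinct $p^{k}$th roots in $\mathbb{C}$, the polynomial $\Phi_{n}(x^{p^{k}})$ has exactly $p^{k}\varphi(n)$ distinct roots; that is, it is separable of degree $p^{k}\varphi(n)$. The same remark applies to $\Phi_{n}(x^{p^{k-1}})$, which will matter in the coprime case.

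For the case $p\mid n$ I would take any primitive $p^{k}n$th root of unity $\omega$ and compute, using $p^{k}\mid p^{k}n$ so that $\gcd(p^{k}n,p^{k})=p^{k}$, that $\omega^{p^{k}}$ has order $p^{k}n/p^{k}=n$; hence $\omega$ is a root of $\Phi_{n}(x^{p^{k}})$. Thus the separable polynomial $\Phi_{p^{k}n}(x)$ divides $\Phi_{n}(x^{p^{k}})$, and since $\deg\Phi_{p^{k}n}=\varphi(p^{k}n)=p^{k}\varphi(n)=\deg\Phi_{n}(x^{p^{k}})$ and both are monic, they coincide.

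For the case $(p,n)=1$ I would show that every root of the right-hand factor $\Phi_{n}(x^{p^{k-1}})\Phi_{p^{k}n}(x)$ is a root of $\Phi_{n}(x^{p^{k}})$: if $\omega^{p^{k-1}}=\zeta$ is a primitive $n$th root then $\omega^{p^{k}}=\zeta^{p}$ is again primitive (as $(p,n)=1$), and if $\omega$ is a primitive $p^{k}n$th root then $\omega^{p^{k}}$ has order $n$ exactly as before. The one point that needs care, and which I expect to be the main obstacle, is to check that the product on the right is itself separable, i.e.\ that $\Phi_{n}(x^{p^{k-1}})$ and $\Phi_{p^{k}n}(x)$ share no root: a common root would be a primitive $p^{k}n$th root $\omega$ whose power $\omega^{p^{k-1}}$ has order $p^{k}n/\gcd(p^{k}n,p^{k-1})=pn\neq n$, contradicting that $\omega^{p^{k-1}}$ be a primitive $n$th root. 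Granting this, the right-hand side is a monic separable polynomial all of whose roots lie among the roots of $\Phi_{n}(x^{p^{k}})$, so it divides $\Phi_{n}(x^{p^{k}})$; the degree count $p^{k-1}\varphi(n)+(p^{k}-p^{k-1})\varphi(n)=p^{k}\varphi(n)$ together with monic-ness then yields the claimed identity.
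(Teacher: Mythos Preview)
Your proof is correct and follows essentially the same approach as the paper: show that every root of the claimed factor(s) is a root of $\Phi_{n}(x^{p^{k}})$, then conclude equality from the degree count and monic-ness. Your explicit verification that $\Phi_{n}(x^{p^{k-1}})$ and $\Phi_{p^{k}n}(x)$ have no common root (hence that the product is separable and genuinely divides $\Phi_{n}(x^{p^{k}})$) is a point of rigor the paper's own proof leaves implicit.
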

\begin{lemma}\label{le-3}
Suppose $n\in\mathbb{N}$, $d$ is a true factor of $n$ and $p$ is a prime. If there is an integer $x\neq\pm1$ such that $p\big|\gcd\big(\Phi_{d}(x),\Phi_{n}(x)\big)$, then $p\big|\frac{n}{d}$.
\begin{proof}
Since $d$ is a true factor of $n$, so $\Phi_{n}(x)\big|\frac{x^{n}-1}{x^{d}-1}$. Therefore, $p\big|\gcd(x^d-1,\frac{x^{n}-1}{x^{d}-1})$. Let $X=x^{d}-1$, then 
\begin{equation}\nonumber
\frac{x^{n}-1}{x^{d}-1}=\frac{{(X+1)}^{\frac{n}{d}}-1}{X}=AX+\frac{n}{d}, \quad(A\in\mathbb{Z}).
\end{equation}
Therefore, $p\big|\gcd(X,AX+\frac{n}{d})\Rightarrow p\big|\gcd(X,\frac{n}{d})\Rightarrow p\big|\frac{n}{d}$.
\end{proof}
\end{lemma}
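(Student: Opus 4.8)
The plan is to convert the cyclotomic divisibility into an ordinary divisibility involving $x^d-1$ and a companion factor, and then to extract $n/d$ by a binomial expansion. First I would record two elementary consequences of the defining factorization $x^m-1=\prod_{e\mid m}\Phi_e(x)$. Since $d\mid d$ we have $\Phi_d(x)\mid x^d-1$, so the hypothesis $p\mid\Phi_d(x)$ gives $p\mid x^d-1$. On the other hand, because $d$ is a proper divisor of $n$ we have $n\nmid d$, so $\Phi_n(x)$ does not divide $x^d-1$ but does divide $x^n-1$; concretely $\frac{x^n-1}{x^d-1}=\prod_{e\mid n,\;e\nmid d}\Phi_e(x)$, and $\Phi_n(x)$ is one of the factors on the right. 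Hence $p\mid\Phi_n(x)$ forces $p\mid\frac{x^n-1}{x^d-1}$, the quotient being a genuine integer because $x\neq\pm1$ guarantees $x^d-1\neq0$. At this point the problem is reduced to showing that if $p$ divides both $x^d-1$ and $\frac{x^n-1}{x^d-1}$, then $p\mid\frac{n}{d}$.

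The key manoeuvre I would use is the substitution $X=x^d-1$, which turns the companion factor into a polynomial in $X$ whose constant term is exactly $n/d$. Writing $x^d=X+1$ and $x^n=(x^d)^{n/d}=(X+1)^{n/d}$, the binomial theorem gives $\frac{x^n-1}{x^d-1}=\frac{(X+1)^{n/d}-1}{X}=\frac{n}{d}+AX$ for some integer $A$, since every surviving binomial coefficient is an integer and each surviving term beyond the linear one carries at least one further factor of $X$. Now $p\mid X$ and $p\mid\frac{n}{d}+AX$, so subtracting the multiple $AX$ yields $p\mid\frac{n}{d}$, which is the claim.

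The only place that requires genuine care, and which I expect to be the main obstacle, is the reduction step rather than the final arithmetic: one must be sure that $\Phi_n(x)$ really appears as a factor of $x^n-1$ but not of $x^d-1$, i.e.\ that $\Phi_n(x)\mid\frac{x^n-1}{x^d-1}$. This is precisely where properness of the divisor $d$ is used, through $n\nmid d$; without it the quotient need not be divisible by $\Phi_n(x)$ and the argument collapses. As a conceptual cross-check that makes the conclusion believable, one may pass to the order $o=\operatorname{ord}_p(x)$, which is well defined since $p\mid x^d-1$ forces $p\nmid x$: the two hypotheses pin $o$ into $d$ and into $n$ in such a way that $d$ and $n$ can differ only by a power of $p$, so $p\mid n/d$. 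The binomial computation above, however, avoids developing this order theory in full, and is the route I would actually write up.
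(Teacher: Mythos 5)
Your proposal is correct and follows essentially the same route as the paper: both reduce to $p\mid\gcd\bigl(x^d-1,\frac{x^n-1}{x^d-1}\bigr)$, substitute $X=x^d-1$, and use the binomial expansion $\frac{(X+1)^{n/d}-1}{X}=AX+\frac{n}{d}$ to conclude $p\mid\frac{n}{d}$. Your write-up is in fact slightly more careful than the paper's, since you justify the divisibility $\Phi_n(x)\mid\frac{x^n-1}{x^d-1}$ via the factorization $x^n-1=\prod_{e\mid n}\Phi_e(x)$ and note where properness of $d$ enters, whereas the paper asserts it without comment.
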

\begin{lemma}\label{le-4}
If there is an integer $x\neq\pm1$ such that $\gcd\big(\Phi_{a}(x),\Phi_{b}(x)\big)>1$, then $\frac{a}{b}$ is a prime power.
\begin{proof}
Suppose $p$ is a prime and $p\big|\gcd\big(\Phi_{a}(x),\Phi_{b}(x)\big)$. Let $a=p^{\alpha}m$, $b=p^{\beta}n$, $\gcd(p,mn)=1$. We just need to prove that $m=n$.\par
First, we claim that $p|\Phi_{m}(x)$. When $\alpha=0$, the conclusion is obvious. When $\alpha\geq1$, according to lemma \ref{le-2}, 
\begin{equation}\nonumber 
\Phi_{m}(x)\equiv\Phi_{m}(x^{p^{\alpha}})\equiv\Phi_{m}(x^{p^{\alpha-1}})\Phi_{a}(x)\equiv0\pmod{p}.
\end{equation}  
Thus $p|x^{m}-1$. Similarly, $p|x^{n}-1$. Let $d=\gcd(m,n)$. Then $p|x^d-1$. If $m\neq n$, suppose $m>n$. Then $d<m$. So, there is a factor of $d$, denoted as $d_1$, such that $p|\Phi_{d_1}(x)$. Since $d_1$ is true factor of $m$, according to lemma \ref{le-3}, we have $p|m$, which is a contradiction. Therefore, $m=n$ and $\frac{a}{b}=p^{\alpha-\beta}$ is a prime power.
\end{proof}
\end{lemma}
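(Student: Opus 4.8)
The plan is to fix a single prime $p$ with $p\mid\gcd\big(\Phi_a(x),\Phi_b(x)\big)$ (such a prime exists precisely because this gcd exceeds $1$) and to peel off the $p$-parts of the two indices. Writing $a=p^{\alpha}m$ and $b=p^{\beta}n$ with $\gcd(p,mn)=1$, it suffices to prove $m=n$: once that is known, $a/b=p^{\alpha-\beta}$ is visibly a power of the single prime $p$, which is exactly the claim. Thus the whole problem reduces to showing that two cyclotomic polynomials evaluated at the same integer $x\neq\pm1$ can share a prime factor only when their indices have the same prime-to-$p$ part.

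The first key step is to push the divisibility $p\mid\Phi_a(x)$ down to the prime-to-$p$ level, i.e.\ to establish $p\mid\Phi_m(x)$. When $\alpha=0$ there is nothing to prove. When $\alpha\geq1$ I would apply Lemma~\ref{le-2} with index $m$ and exponent $\alpha$ (legitimate since $\gcd(p,m)=1$) to get $\Phi_m(x^{p^{\alpha}})=\Phi_m(x^{p^{\alpha-1}})\,\Phi_{p^{\alpha}m}(x)=\Phi_m(x^{p^{\alpha-1}})\,\Phi_a(x)$, and then reduce modulo $p$. Since $x^{p}\equiv x\pmod p$ by Fermat's little theorem, iterating gives $x^{p^{\alpha}}\equiv x\pmod p$, hence $\Phi_m(x^{p^{\alpha}})\equiv\Phi_m(x)\pmod p$ because $\Phi_m$ has integer coefficients. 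The right-hand side is $\equiv0\pmod p$ as $p\mid\Phi_a(x)$, so $p\mid\Phi_m(x)$. The identical argument applied to $b$ yields $p\mid\Phi_n(x)$.

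From here the argument is purely arithmetic. Because $\Phi_m(x)\mid x^m-1$ and $\Phi_n(x)\mid x^n-1$, we obtain $p\mid x^m-1$ and $p\mid x^n-1$; in particular $p\nmid x$, so the multiplicative order of $x$ modulo $p$ is well defined and divides both $m$ and $n$, hence divides $d:=\gcd(m,n)$, giving $p\mid x^{d}-1$. Factoring $x^{d}-1=\prod_{e\mid d}\Phi_e(x)$ then produces a divisor $d_1\mid d$ with $p\mid\Phi_{d_1}(x)$. Suppose for contradiction $m\neq n$, say $m>n$; then $d\leq n<m$, so $d_1$ is a true factor of $m$ while $p$ divides both $\Phi_{d_1}(x)$ and $\Phi_m(x)$. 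Lemma~\ref{le-3} now forces $p\mid m/d_1$, hence $p\mid m$, contradicting $\gcd(p,m)=1$. Therefore $m=n$ and $a/b=p^{\alpha-\beta}$ is a prime power.

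The main obstacle I anticipate is the descent $p\mid\Phi_a(x)\Rightarrow p\mid\Phi_m(x)$ of the second paragraph: the remaining steps are either routine divisor bookkeeping or a direct appeal to Lemma~\ref{le-3}, but this step is where one must fuse the cyclotomic factorization of Lemma~\ref{le-2} with the congruence $x^{p}\equiv x\pmod p$ to transport the common prime from the full index $a$ down to its prime-to-$p$ part $m$. Applying Lemma~\ref{le-2} with the correct index and exponent, and then correctly discarding the extraneous factor $\Phi_m(x^{p^{\alpha-1}})$ modulo $p$, is the delicate point; after it, Lemma~\ref{le-3} delivers the contradiction cleanly.
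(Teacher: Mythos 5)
Your proposal is correct and follows essentially the same route as the paper's own proof: reduce to showing $m=n$ via Lemma~\ref{le-2} and the congruence $x^{p^{\alpha}}\equiv x\pmod p$, pass to $p\mid x^d-1$ with $d=\gcd(m,n)$, and derive a contradiction from Lemma~\ref{le-3}. In fact you fill in details the paper leaves implicit (the Fermat's little theorem step, the order argument for $p\mid x^d-1$, and the factorization $x^d-1=\prod_{e\mid d}\Phi_e(x)$ producing $d_1$).
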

\begin{lemma}\label{le-5}
Suppose integer $n\geq3$ is a power of a prime number $q$. For any integer $x\geq2$, if $\Phi_{n}(x)>q$, then $\Phi_{n}(x)$ has a prime factor $p\neq q$.
\begin{proof}
Let $n=q^\alpha$. When $\alpha=1$, we have $q\neq2$ and $\Phi_{n}(x)=x^{q-1}+\cdots+x+1$. Since $q|x^{q-1}+\cdots+x+1\Leftrightarrow x\equiv1\pmod{q}$, if $x\not\equiv1\pmod{q}$, then $\Phi_{n}(x)$ has a prime factor $p\neq q$. If $x\equiv1\pmod{q}$, suppose $x=qr+1$, then
\begin{equation}\nonumber
x^{q-1}+\cdots+x+1\equiv{(qr+1)}^{q-1}+\cdots+qr+1+1\equiv\frac{q(q-1)}{2} qr+q\equiv q\pmod{q^2}.
\end{equation}
Since $\Phi_{n}(x)>q$, $\Phi_{n}(x)$ has a prime factor $p\neq q$.\par
When $\alpha\geq2$, according to lemma \ref{le-2}, $\Phi_{n}(x)=\Phi_{q}(x^{q^{\alpha-1}})$.
Since $q|\Phi_{q}(x^{q^{\alpha-1}})\Leftrightarrow q|\Phi_{q}(x)\Leftrightarrow x\equiv1\pmod{q}$, when $x\not\equiv1\pmod{q}$, $\Phi_{n}(x)$ has a prime factor $p\neq q$. Now suppose that $x\equiv1\pmod{q}$.\par
If $q\neq2$, then $\Phi_{q}(x^{q^{\alpha-1}})\equiv q\pmod{q^2}$. $\Phi_{n}(x)$ has a prime factor $p\neq q$.\par
If $q=2$, then $\Phi_{n}(x)=\Phi_{q}(x^{q^{\alpha-1}})=x^{2^{\alpha-1}}+1\equiv2\pmod4$. So $\Phi_{n}(x)$ has a prime factor $p\neq 2$.
\end{proof}
\end{lemma}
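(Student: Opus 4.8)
The plan is to control the $q$-adic valuation of $\Phi_n(x)$, since the whole force of assuming $n=q^\alpha$ is a prime power is that $q$ is the only prime that can divide $\Phi_n(x)$ for structural reasons, and that when it does divide, it divides exactly once. The key observation to set up at the start is that once I know $v_q(\Phi_n(x))\leq 1$, the hypothesis $\Phi_n(x)>q$ forces a second prime for free: if $q$ were the only prime factor, then $\Phi_n(x)$ would equal $q^{v_q(\Phi_n(x))}\in\{1,q\}$, contradicting $\Phi_n(x)>q$. So the entire task reduces to proving $v_q(\Phi_n(x))\leq 1$.

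First I would put everything into a single normal form. Writing $n=q^\alpha$ with $\alpha\geq 1$ and applying Lemma~\ref{le-2} in its case $q\mid q$ with exponent $\alpha-1$, I obtain $\Phi_n(x)=\Phi_q\big(x^{q^{\alpha-1}}\big)$ (trivially an identity when $\alpha=1$). Setting $y=x^{q^{\alpha-1}}$, the problem becomes a statement about $\Phi_q(y)=1+y+\cdots+y^{q-1}$; and since $x^{q^{\alpha-1}}\equiv x\pmod q$ by Fermat, we have $y\equiv 1\pmod q$ precisely when $x\equiv 1\pmod q$.

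Next I would split on $x\bmod q$. If $x\not\equiv 1\pmod q$, then $\Phi_q(y)=\frac{y^q-1}{y-1}\equiv\frac{y-1}{y-1}=1\pmod q$ using $y^q\equiv y$, so $q\nmid\Phi_n(x)$; as $\Phi_n(x)>q\geq 2$, every prime factor is already distinct from $q$ and we are done. The substantive case is $x\equiv 1\pmod q$, where I must show $v_q(\Phi_n(x))=1$. For odd $q$ I would write $y=1+qs$ and expand modulo $q^2$: $y^j\equiv 1+jqs$, so $\Phi_q(y)=\sum_{j=0}^{q-1}y^j\equiv q+qs\cdot\frac{q(q-1)}{2}\equiv q\pmod{q^2}$, the cross term vanishing because $q\mid\frac{q(q-1)}{2}$ for odd $q$. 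Hence $q\,\|\,\Phi_n(x)$ as required.

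The hard part, and the one place this uniform computation breaks, is $q=2$, which must be treated separately. Here $n=2^\alpha\geq 3$ forces $\alpha\geq 2$, and $\Phi_n(x)=y+1=x^{2^{\alpha-1}}+1$. With $x\equiv 1\pmod 2$ (so $x$ is odd) and $2^{\alpha-1}\geq 2$, the exponent is even, so $x^{2^{\alpha-1}}$ is an odd perfect square and therefore $\equiv 1\pmod 4$; thus $\Phi_n(x)\equiv 2\pmod 4$ and again $v_2(\Phi_n(x))=1$. In every case where $q\mid\Phi_n(x)$ I have shown $v_q(\Phi_n(x))=1$, so by the opening remark the inequality $\Phi_n(x)>q$ guarantees a prime factor $p\neq q$, completing the proof. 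The main obstacle is thus not conceptual but the careful handling of the residue computations modulo $q^2$ (and the $q=2$ exception), together with verifying that the reduction to $\Phi_q$ via Lemma~\ref{le-2} is valid for all $\alpha\geq 1$.
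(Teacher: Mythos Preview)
Your proof is correct and follows essentially the same route as the paper's: both reduce to $\Phi_q(y)$ with $y=x^{q^{\alpha-1}}$, dispose of the case $x\not\equiv 1\pmod q$ by showing $q\nmid\Phi_n(x)$, and in the case $x\equiv 1\pmod q$ compute $\Phi_q(y)\equiv q\pmod{q^2}$ for odd $q$ and $\Phi_n(x)\equiv 2\pmod 4$ for $q=2$. The only difference is organizational---you treat all $\alpha\geq 1$ uniformly via the substitution $y=x^{q^{\alpha-1}}$, whereas the paper splits into $\alpha=1$ and $\alpha\geq 2$ before doing the same computations.
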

\begin{lemma}\label{le-6}
Suppose $n=p^{\alpha}q^{\beta}$, $\alpha,\beta\geq1$. For any integer $x\geq2$, if $\Phi_{n}(x)>pq$, then $\Phi_{n}(x)$ has a prime factor $r\neq p,q$.
\begin{proof}
According to lemma \ref{le-2},
\begin{equation}\nonumber
\Phi_{n}(x)=\frac{\Phi_{q^\beta}(x^{p^\alpha})}{\Phi_{q^\beta}(x^{p^{\alpha-1}}) }=\frac{\Phi_{q}(x^{p^\alpha q^{\beta-1}})}{\Phi_{q^\beta}(x^{p^{\alpha-1}})}.
\end{equation}
The proof of lemma \ref{le-5} implies that $q^2\nmid\Phi_{q}(x^{p^\alpha q^{\beta-1}})$ . Hence $q^2\nmid\Phi_{n}(x)$ . Similarly, $p^2\nmid\Phi_{n}(x)$. Since $\Phi_{n}(x)>pq$, $\Phi_{n}(x)$ has a prime factor $r\neq p,q$.
\end{proof}
\end{lemma}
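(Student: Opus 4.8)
The plan is to deduce the lemma from two valuation bounds, $p^{2}\nmid\Phi_{n}(x)$ and $q^{2}\nmid\Phi_{n}(x)$, followed by an elementary size argument. Granting these bounds, suppose for contradiction that every prime factor of $\Phi_{n}(x)$ lies in $\{p,q\}$. Then $\Phi_{n}(x)=p^{a}q^{b}$ with $a,b\in\{0,1\}$, so $\Phi_{n}(x)\mid pq$ and hence $\Phi_{n}(x)\le pq$, contradicting the hypothesis $\Phi_{n}(x)>pq$. Thus $\Phi_{n}(x)$ must have a prime factor $r\neq p,q$. All the work therefore goes into the two valuation bounds, and since $p$ and $q$ enter $n=p^{\alpha}q^{\beta}$ symmetrically it suffices to prove $q^{2}\nmid\Phi_{n}(x)$.

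To bound the $q$-adic valuation I would first peel off the $p$-part of $n$ with Lemma \ref{le-2}. As $\gcd(p,q^{\beta})=1$, its first identity (with $k=\alpha$) gives $\Phi_{q^{\beta}}(x^{p^{\alpha}})=\Phi_{q^{\beta}}(x^{p^{\alpha-1}})\,\Phi_{n}(x)$, while its second identity gives $\Phi_{q^{\beta}}(y)=\Phi_{q}(y^{q^{\beta-1}})$; combining them yields
\begin{equation}\nonumber
\Phi_{n}(x)=\frac{\Phi_{q}\!\big(x^{p^{\alpha}q^{\beta-1}}\big)}{\Phi_{q^{\beta}}(x^{p^{\alpha-1}})}.
\end{equation}
In particular $\Phi_{n}(x)$ is an integer dividing the numerator $\Phi_{q}(z)$, where $z=x^{p^{\alpha}q^{\beta-1}}$, so it would suffice to show $q^{2}\nmid\Phi_{q}(z)$. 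For this I would reuse the modular computation in the proof of Lemma \ref{le-5}: if $z\not\equiv1\pmod q$ then $q\nmid\Phi_{q}(z)$; if $z\equiv1\pmod q$ and $q$ is odd then $\Phi_{q}(z)\equiv q\pmod{q^{2}}$; and if $q=2$ with $\beta\ge2$ the exponent $p^{\alpha}q^{\beta-1}$ is even, so $z$ is a perfect square and $\Phi_{2}(z)=z+1$ is not divisible by $4$. In each of these cases $q^{2}\nmid\Phi_{q}(z)$, hence $q^{2}\nmid\Phi_{n}(x)$.

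The one situation that falls outside this scheme — and the step I expect to be the main obstacle — is $q=2$ with $\beta=1$. Then $z=x^{p^{\alpha}}$ carries an odd exponent, $z$ need not be a square, and the numerator $\Phi_{2}(z)=x^{p^{\alpha}}+1$ can genuinely be divisible by $4$ (already $3^{3}+1=28$ shows this), so bounding through the numerator breaks down. Here I would argue on $\Phi_{n}(x)$ itself: with $w=x^{p^{\alpha-1}}$ the quotient collapses to $\Phi_{n}(x)=\dfrac{w^{p}+1}{w+1}=w^{p-1}-w^{p-2}+\cdots-w+1$, an alternating sum of the odd number $p$ of terms. This sum is odd — it is a sum of $p$ odd numbers when $w$ is odd, and is $\equiv1\pmod2$ when $w$ is even — so $4\nmid\Phi_{n}(x)$. (Equivalently, lifting the exponent at the prime $2$ for the odd exponent $p$ gives $v_{2}(w^{p}+1)=v_{2}(w+1)$, so the quotient is odd.) The symmetric exceptional case $p=2,\ \alpha=1$ is disposed of identically after exchanging the roles of $p$ and $q$.

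Having secured $p^{2}\nmid\Phi_{n}(x)$ and $q^{2}\nmid\Phi_{n}(x)$ in every case, the size argument of the first paragraph applies and produces the desired prime factor $r\neq p,q$, completing the proof.
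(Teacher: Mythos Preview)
Your argument follows exactly the paper's route: use Lemma~\ref{le-2} to exhibit $\Phi_{n}(x)$ as a divisor of $\Phi_{q}\!\big(x^{p^{\alpha}q^{\beta-1}}\big)$, appeal to the computations in the proof of Lemma~\ref{le-5} to get $q^{2}\nmid\Phi_{n}(x)$ (and symmetrically for $p$), and finish with the size argument $\Phi_{n}(x)>pq$. The one substantive difference is that the paper simply asserts that Lemma~\ref{le-5}'s proof gives $q^{2}\nmid\Phi_{q}\!\big(x^{p^{\alpha}q^{\beta-1}}\big)$, while you notice that this is actually false when $q=2$ and $\beta=1$ (your example $3^{3}+1=28$ shows $4\mid\Phi_{2}(x^{p^{\alpha}})$ can occur), and you repair the step by computing $\Phi_{2p^{\alpha}}(x)=w^{p-1}-w^{p-2}+\cdots-w+1$ directly and checking it is odd. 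So your proof is the same in structure but strictly more careful: the paper's argument, read literally, has a small gap at precisely the point you isolated, and your patch closes it correctly.
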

\begin{lemma}\label{le-7}
Suppose $m\in\mathbb{Z}$, $A=\{a_s (n_s)\}_{s=1}^t$ is a covering system, $p_1,p_2,\dots,p_t$ are distinct prime factors of $m^{n_1}-1,m^{n_2}-1,\dots,m^{n_t}-1$ respectively. Then there are infinitely many integers $k$, such that $km^n+1$ is a composite number for any $n\in\mathbb{N}$.
\begin{proof}
According to Chinese remainder theorem, there are infinitely many integers $k>p_1,p_2,\dots,p_t$, satisfying the following congruence equations:
\begin{equation}\nonumber
\left\{\begin{array}{c}
km^{a_1}+1\equiv0\pmod{p_1}\\
km^{a_2}+1\equiv0\pmod{p_2}\\ \cdots\\
km^{a_t}+1\equiv0\pmod{p_t}.\\
\end{array}\right.
\end{equation}
For any $n\in\mathbb{N}$, there is an integer $i$ $(1\leq i\leq t)$, such that $n\equiv a_i\pmod{n_i}$. Since $p_i| m^{n_i}-1$, we have $m^{n_i}\equiv1\pmod{p_i}$. Thus $km^n+1\equiv km^{a_i}+1\equiv0\pmod{p_i}$, which implies that $km^n+1$ is a composite number.
\end{proof}
\end{lemma}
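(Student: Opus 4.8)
The plan is to combine the Chinese remainder theorem with the covering property of $A$, the driving mechanism being that each prime $p_s$ satisfies $m^{n_s}\equiv 1\pmod{p_s}$, so that $m^n$ collapses to $m^{a_s}$ modulo $p_s$ all along the residue class $a_s\pmod{n_s}$. The first thing I would record is that $p_s\nmid m$ for every $s$: since $p_s\mid m^{n_s}-1$ we have $m^{n_s}\equiv 1\pmod{p_s}$, so $m$ is a unit modulo $p_s$, its multiplicative order modulo $p_s$ divides $n_s$, and hence $m^{a_s}\bmod p_s$ is well defined and depends only on the residue class $a_s\pmod{n_s}$. In particular $m^{a_s}$ is invertible modulo $p_s$, so the single congruence $km^{a_s}+1\equiv 0\pmod{p_s}$, equivalently $k\equiv -(m^{a_s})^{-1}\pmod{p_s}$, is solvable for each $s$.

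Next I would assemble these $t$ congruences into one simultaneous system and invoke the Chinese remainder theorem. Because $p_1,\dots,p_t$ are \emph{distinct} primes they are pairwise coprime, so the system has a solution, and the full solution set is a single residue class modulo $p_1p_2\cdots p_t$. That residue class contains infinitely many positive integers $k$ exceeding $\max_s p_s$; fixing any such $k$ is what will ultimately produce the infinitely many values asserted in the statement.

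Then comes the covering step, where the hypothesis that $A$ is a covering system is used. Given an arbitrary $n\in\mathbb{N}$, the covering property supplies an index $i$ with $n\equiv a_i\pmod{n_i}$. Writing $n=a_i+\ell n_i$ and using $m^{n_i}\equiv 1\pmod{p_i}$ gives $m^n\equiv m^{a_i}\pmod{p_i}$, whence $km^n+1\equiv km^{a_i}+1\equiv 0\pmod{p_i}$ by the very congruence $k$ was chosen to satisfy. Thus for the fixed $k$, the integer $km^n+1$ is divisible by $p_i$ for every choice of $n$.

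The one point that genuinely needs care — and the only place the estimate $k>\max_s p_s$ is actually invoked — is upgrading ``divisible by $p_i$'' to ``composite''. Since $k>0$ and $m^n\geq 1$ (the lemma being applied with $m\geq 2$), we have $km^n+1>k>p_i$, so $p_i$ is a \emph{proper} divisor of $km^n+1$ and the latter is genuinely composite rather than equal to the prime $p_i$ itself. As $n$ was arbitrary, this holds for all $n\in\mathbb{N}$, and since infinitely many admissible $k$ exist, the lemma follows. No individual step is deep; the substance is the clean interplay between the period relation $m^{n_s}\equiv 1\pmod{p_s}$ and the fact that the classes $a_s(n_s)$ together exhaust $\mathbb{Z}$.
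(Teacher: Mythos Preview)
Your proof is correct and follows essentially the same approach as the paper's: set up the simultaneous congruences $km^{a_s}+1\equiv 0\pmod{p_s}$, solve via the Chinese remainder theorem for infinitely many $k>\max_s p_s$, and then use the covering property together with $m^{n_i}\equiv 1\pmod{p_i}$ to force $p_i\mid km^n+1$ for every $n$. You are simply more explicit than the paper about why each congruence is solvable (invertibility of $m^{a_s}$) and why divisibility by $p_i$ genuinely yields compositeness (the size estimate $km^n+1>p_i$), points the paper leaves implicit.
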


\begin{proof}[Proof of Theorem \ref{th-1}]
$m=2$ is the condition which Sierpinski proved. We assume $m\geq3$ in the rest of the proof.\par
When $m\neq2^l-1$, $l\in\mathbb{N}$, we can use the covering system 
\begin{equation}\nonumber
\{a_i (n_i)\}_{i=1}^{5}=\{0(2),0(3),1(4),5(6),7(12)\}
\end{equation}
to complete the proof. Snice $m\neq2^l-1$, so $\Phi_{2}(m)=m+1$ has an odd prime factor $p_1$. Thus $\gcd(m-1,p_1)=1$. Since $m\geq3$, according to lemma \ref{le-5} and lemma \ref{le-6}, we can take prime factors $p_2,p_3,p_4,p_5$ from $\Phi_{3}(m),\Phi_{4}(m),\Phi_{6}(m),\Phi_{12}(m)$ respectively such that $p_2\neq3, p_3\neq2, p_4\neq2,3, p_5\neq2,3$. Since $\Phi_{1}(m)=m-1$, according to lemma \ref{le-3} and lemma \ref{le-4}, it follows that
\begin{equation}\nonumber
\gcd(m-1,p_2)=\gcd(m-1,p_3)=\gcd(m-1,p_4)=\gcd(m-1,p_5)=1,
\end{equation}
which implies that $p_1,p_2,p_3,p_4,p_5$ are not the factors of $m-1$.\par
According to lemma \ref{le-3}, if $\gcd\big(\Phi_{a}(x),\Phi_{b}(x)\big)>1$, $(a\leq b)$, then its prime factor $ p\big|\frac{b}{a}$. Since we always take a prime factor $p_i$ from $\Phi_{n_i}(m)$ such that $\gcd(p_i,n_i)=1$ for every $i\in\{1,2,3,4,5\}$, according to lemma \ref{le-4}, we have $\gcd(p_i,p_j)=1$, which implies that $p_i\neq p_j$, $(i\neq j)$.\par
So, $p_1,p_2,p_3,p_4,p_5$ are distinct. According to lemma \ref{le-7}, $k$ just need to satisfy the following congruence equations
\begin{equation}\nonumber
\left\{\begin{array}{c}
k+1\equiv0\pmod{p_1}\\
k+1\equiv0\pmod{p_2}\\
mk+1\equiv0\pmod{p_3}\\
m^{5}k+1\equiv0\pmod{p_4}\\
m^{7}k+1\equiv0\pmod{p_5}\\
k\not\equiv-1\pmod{q_1}\\
k\not\equiv-1\pmod{q_2}\\ \cdots\\
k\not\equiv-1\pmod{q_t},\\
\end{array}\right.
\end{equation}
where $q_1,q_2,\dots,q_t$ are all distinct prime factors of $m-1$. Since $p_1,p_2,p_3,p_4,p_5$, $q_1,q_2,\dots,q_t$ are distinct, according to Chinese remainder theorem, there are infinitely many such integers $k$.\par
When $m=2^l-1$, $l\in\mathbb{N}$, we can use the covering system 
\begin{equation}\nonumber
\{a_i (n_i)\}_{i=1}^{13}=\left\{\begin{array}{c}
2(4),4(8),8(16),8(24),0(48),\\
1(3),5(6),3(12),1(5),7(10),3(15),9(20),15(30)\\
\end{array}\right\}
\end{equation}
to complete the proof. Since $m\geq3$, according to lemma \ref{le-5} and lemma \ref{le-6}, we can take a prime factor $p_i$ from $\Phi_{n_i}(m)$, such that $\gcd(p_i,n_i)=1$ for every $i\in\{1,2,\dots,12\}$. Since $\Phi_{30}(m)=m^8+m^7-m^5-m^4-m^3+m+1$, it is easy to verify that $\gcd(30,\Phi_{30}(m))=1$. Hence we can take a prime factor $p_{13}\neq2,3,5$ from $\Phi_{30}(m)$. According to lemma \ref{le-3} and lemma \ref{le-4}, we have $\gcd(m-1,p_i)=1$, $\gcd(p_i,p_j)=1$, $(i\neq j)$, which implies that $p_1,p_2,…,p_{13}$ are distinct and are not the factors of $m-1$.\par
According to lemma \ref{le-7}, $k$ just need to satisfy the following congruence equations:
\begin{equation}\nonumber
\left\{\begin{array}{c}
m^{a_1}k+1\equiv0\pmod{p_1}\\
m^{a_2}k+1\equiv0\pmod{p_2}\\ \cdots\\
m^{a_{13}}k+1\equiv0\pmod{p_{13}}\\
k\not\equiv-1\pmod{q_1}\\
k\not\equiv-1\pmod{q_2}\\ \cdots\\
k\not\equiv-1\pmod{q_t},\\
\end{array}\right.
\end{equation}
where $q_1,q_2,\dots,q_t$ are all distinct prime factors of $m-1$. Since $p_1,p_2,\dots,p_{13}$, $q_1,q_2,\dots,q_t$ are distinct, according to Chinese remainder theorem, there are infinitely many such integers $k$. This completes the proof of Theorem \ref{th-1}. 
\end{proof}

Such positive integers $k$ are called “Sierpinski numbers based $m$”.\par
If $k\not\equiv-1\pmod{q_i}$ $(i=1,2,\dots,t)$ are replaced by $k\equiv0\pmod{m-1}$ in the proof of theorem \ref{th-1}, then we get integers $k$ such that $m-1|k$. Therefore, we have the following corollary.
\begin{corollary}\label{co-1}
For any integer $m>1$, there are infinitely many integers $k$, such that for any positive integer $n$, $k(m-1)m^n+1$ is a composite number.
\end{corollary}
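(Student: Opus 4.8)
The plan is to re-run the proof of Theorem \ref{th-1} essentially verbatim, altering only the non-triviality constraints imposed on $k$. In that proof, after fixing a covering system $\{a_i(n_i)\}$ (the five-element system or the thirteen-element system, according to whether $m=2^l-1$) and selecting for each $i$ a prime $p_i\mid\Phi_{n_i}(m)$ with $\gcd(p_i,n_i)=1$, the integer $k$ was required to satisfy $m^{a_i}k+1\equiv 0\pmod{p_i}$ together with $k\not\equiv-1\pmod{q_j}$ for each prime $q_j\mid(m-1)$. My proposal is to discard the latter conditions and impose in their place the single stronger congruence $k\equiv 0\pmod{m-1}$.

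First I would recall from the proof of Theorem \ref{th-1} that the chosen primes satisfy $\gcd(p_i,m-1)=1$ (this is exactly what Lemma \ref{le-3} and Lemma \ref{le-4} were invoked to guarantee) and that the $p_i$ are pairwise distinct. Consequently the moduli $p_1,p_2,\dots$ and $m-1$ are pairwise coprime, so the Chinese remainder theorem still applies to the enlarged system consisting of the congruences $m^{a_i}k+1\equiv 0\pmod{p_i}$ and $k\equiv 0\pmod{m-1}$. This produces infinitely many integers $k$ satisfying all of them, and in particular $m-1\mid k$ for every such $k$.

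Next, for each such $k$ the covering-system argument of Lemma \ref{le-7} carries over unchanged: given $n\in\mathbb{N}$ there is an index $i$ with $n\equiv a_i\pmod{n_i}$, and since $p_i\mid m^{n_i}-1$ we get $km^n+1\equiv m^{a_i}k+1\equiv 0\pmod{p_i}$, so that $km^n+1$ is a composite number. Finally I would reindex by writing $k=(m-1)k'$, which is legitimate because $m-1\mid k$; the composite value then becomes $km^n+1=k'(m-1)m^n+1$. As $k$ ranges over the infinitely many solutions of the system, $k'$ takes infinitely many distinct integer values, and each such $k'$ makes $k'(m-1)m^n+1$ composite for every $n$, which is precisely the assertion of the corollary.

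The entire substance is inherited from Theorem \ref{th-1}; the only point that genuinely needs checking is that swapping the conditions $k\not\equiv-1\pmod{q_j}$ for $k\equiv 0\pmod{m-1}$ does not spoil the coprimality required for the Chinese remainder step. This I would verify directly, and it is immediate: $k\equiv 0\pmod{m-1}$ forces $k\equiv 0\pmod{q_j}$, hence $k\not\equiv-1\pmod{q_j}$ for every prime $q_j\ge 2$, so the new family of solutions is simply a subfamily of the old one and no fresh obstruction can appear. In short, the main (and essentially only) thing to confirm is this compatibility, so I do not expect any serious difficulty beyond bookkeeping.
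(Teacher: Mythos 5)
Your proposal is correct and matches the paper's own argument: the paper proves Corollary \ref{co-1} precisely by replacing the conditions $k\not\equiv-1\pmod{q_i}$ in the proof of Theorem \ref{th-1} with the single congruence $k\equiv0\pmod{m-1}$, then writing $k=(m-1)k'$. Your write-up simply fills in the coprimality and Chinese remainder details that the paper leaves implicit.
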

Replacing $k$ with $-k$ in theorem \ref{th-1} and corollary \ref{co-1}, we have the following corollaries, which are generalizations of Riesel problem[2].
\begin{corollary}\label{co-2}
For any integer $m>1$, there are infinitely many integers $k$ satisfying $k\not\equiv1\!\pmod{q}$ for any prime number $q|(m-1)$, such that for any positive integer $n$, $km^n-1$ is a composite number.
\end{corollary}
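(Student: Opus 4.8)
The plan is to run the construction from the proof of Theorem \ref{th-1} essentially verbatim, performing the substitution $k \mapsto -k$, which amounts to flipping the sign of the constant term from $+1$ to $-1$. The crucial observation is that the selection of primes in that proof depends only on the chosen covering system and on $m$, and never on the sign of the constant; consequently I would reuse the same two covering systems from Lemma \ref{le-1} (according to whether or not $m = 2^l - 1$) and the same primes $p_1, \dots, p_t$ drawn from $\Phi_{n_i}(m)$ via Lemmas \ref{le-5} and \ref{le-6}. By Lemmas \ref{le-3} and \ref{le-4} these primes are pairwise distinct and coprime to $m-1$, exactly as before.

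First I would replace each divisibility requirement $m^{a_i} k + 1 \equiv 0 \pmod{p_i}$ by $m^{a_i} k - 1 \equiv 0 \pmod{p_i}$, that is, $m^{a_i} k \equiv 1 \pmod{p_i}$. Then on the residue class $n \equiv a_i \pmod{n_i}$, the relation $p_i \mid m^{n_i} - 1$ gives $m^n \equiv m^{a_i} \pmod{p_i}$, so that $km^n - 1 \equiv k m^{a_i} - 1 \equiv 0 \pmod{p_i}$, which is precisely what the covering system demands on that class.

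Next I would treat the trivial factors. For any prime $q_j \mid (m-1)$ we have $m \equiv 1 \pmod{q_j}$, hence $km^n - 1 \equiv k - 1 \pmod{q_j}$ for every $n$; thus $q_j \mid km^n - 1$ for all $n$ exactly when $k \equiv 1 \pmod{q_j}$. To rule out this trivial case I would impose $k \not\equiv 1 \pmod{q_j}$ for each $j$, which is the natural replacement for the conditions $k \not\equiv -1 \pmod{q_j}$ appearing in the theorem.

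Finally, since the $p_i$ are distinct and distinct from the $q_j$ (none of them divides $m-1$), the Chinese remainder theorem yields infinitely many integers $k$, in particular arbitrarily large positive ones, satisfying all of the congruences simultaneously. For such $k$ and any $n \geq 1$ the value $km^n - 1$ is positive and larger than the relevant $p_i$, hence a genuine composite number. I do not anticipate a real obstacle: the argument is a sign-for-sign mirror of Theorem \ref{th-1}, and the only point requiring care is the insensitivity of the prime selection to the replacement $+1 \mapsto -1$, which holds because that selection is governed solely by the covering system and the cyclotomic values $\Phi_{n_i}(m)$.
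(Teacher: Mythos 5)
Your proposal is correct in substance, but it takes a noticeably longer route than the paper, which disposes of this corollary in a single line: substitute $k\mapsto -k$ in Theorem \ref{th-1}, using that $k\equiv-1\pmod{q}$ is equivalent to $-k\equiv1\pmod{q}$ and that $(-k)m^n-1=-(km^n+1)$. You instead re-open the proof of Theorem \ref{th-1} and re-run the covering-system construction with the constant term flipped, verifying that the prime selection via Lemmas \ref{le-5}, \ref{le-6}, \ref{le-3} and \ref{le-4} never sees the sign. What your unfolded version buys is a cleaner treatment of positivity: the paper's black-box substitution turns the positive composite values $km^n+1$ into negative numbers, so its one-line proof implicitly reads ``composite'' up to sign, whereas your Chinese-remainder step lets you choose $k$ positive and large, making $km^n-1$ honestly a positive composite. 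What it costs is that you inherit the construction's restriction to $m\geq3$: for $m=2$ the first covering system genuinely fails (one needs five distinct primes dividing $2^2-1,2^3-1,2^4-1,2^6-1,2^{12}-1$, but $2^6-1=63$ offers no prime outside $\{3,7\}$, both already consumed), which is exactly why the proof of Theorem \ref{th-1} defers $m=2$ to Sierpinski. The mirror of that deferral here is Riesel's theorem [2], and you should say so explicitly; as written, your ``essentially verbatim'' construction has a small hole at $m=2$, although the corollary's congruence conditions are vacuous there (since $m-1=1$) and the needed result is precisely the one cited as [2] in the introduction.
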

\begin{corollary}\label{co-3}
For any integer $m>1$, there are infinitely many integers $k$, such that for any positive integer $n$, $k(m-1)m^n-1$ is a composite number.
\end{corollary}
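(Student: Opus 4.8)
The plan is to obtain Corollary~\ref{co-3} as a formal consequence of Corollary~\ref{co-1}, via the substitution $k\mapsto -k$ announced just before the statement. Corollary~\ref{co-1} supplies infinitely many integers $k$ for which $k(m-1)m^n+1$ is composite for every $n\in\mathbb{N}$. The key algebraic observation is the identity $(-k)(m-1)m^n+1=-\big(k(m-1)m^n-1\big)$: since an integer $N$ admits a nontrivial factorization exactly when $-N$ does, divisibility of $k(m-1)m^n+1$ by the prime $p_i$ furnished by the covering-system construction immediately forces $p_i\mid k(m-1)m^n-1$ after the sign flip. Thus replacing each admissible $k$ from Corollary~\ref{co-1} by $-k$ converts the ``$+1$'' conclusion into the desired ``$-1$'' conclusion, and because negation is a bijection the resulting set of $k$ is again infinite.

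To present this rigorously and to produce genuinely positive $k$ (so that $k(m-1)m^n-1$ is a positive composite integer rather than merely composite up to sign), I would re-run the construction with the target congruences $(m-1)m^{a_i}k-1\equiv0\pmod{p_i}$ in place of the ``$+1$'' versions underlying Corollary~\ref{co-1}, keeping the same case distinction $m\neq 2^l-1$ / $m=2^l-1$, the same covering systems, and for each block the same prime factor $p_i$ of $\Phi_{n_i}(m)$. By Lemmas~\ref{le-3}--\ref{le-6} these $p_i$ are distinct and coprime to $m-1$, and since $p_i\mid m^{n_i}-1$ forces $\gcd(p_i,m)=1$, each coefficient $(m-1)m^{a_i}$ is a unit modulo $p_i$; hence every congruence is solvable, and, the moduli being pairwise coprime, the Chinese remainder theorem yields infinitely many positive solutions $k$. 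For a given $n$ one picks $i$ with $n\equiv a_i\pmod{n_i}$, uses $m^n\equiv m^{a_i}\pmod{p_i}$ to get $p_i\mid k(m-1)m^n-1$, and takes $k$ large enough that this value exceeds $p_i$, so it is composite exactly as in Lemma~\ref{le-7}.

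I do not expect a genuine obstacle, since Corollary~\ref{co-3} is a formal mirror of Corollary~\ref{co-1}; the only points deserving care are bookkeeping. One must confirm that the coprimality and consistency conditions underlying the Chinese remainder step are sign-independent — which they are, as they involve only the primes $p_i$ and the relations $\gcd(p_i,m(m-1))=1$, none of which is affected by replacing $+1$ by $-1$ — and one must fix the convention under which a number counts as composite when its negative does. The direct re-run above removes this second point, because the constructed $k$ make $k(m-1)m^n-1$ positive. In this way Corollaries~\ref{co-2} and~\ref{co-3} stand to Theorem~\ref{th-1} and Corollary~\ref{co-1} in the same relation that the Riesel problem bears to Sierpinski's.
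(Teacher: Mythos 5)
Your proposal is correct and takes essentially the same route as the paper: the paper obtains Corollary~\ref{co-3} precisely by the substitution $k\mapsto -k$ in Corollary~\ref{co-1}, which is your first paragraph. Your supplementary direct re-run, solving $(m-1)m^{a_i}k\equiv 1\pmod{p_i}$ via the same covering systems and primes, is just a more careful elaboration of that same construction, and it correctly disposes of the only delicate point (the sign convention for compositeness) that the paper leaves implicit.
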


\section{Minimum Sierpinski Number Based $m$}
After proving the existence of Sierpinski numbers based $m$, we can consider how to find the minimum Sierpinski number based $m$.\par
It can be inferred from previous proofs that as long as the covering system has less congruences and the corresponding primes are smaller, the positive integer $k$, which is obtianed from congruence equations, will be smaller.\par
In fact, the covering system used to obtain $k$ need not have distinct moduli, as long as $m^{n_1}-1,m^{n_2}-1,\dots,m^{n_t}-1$ have distinct prime factors $p_1,p_2,\dots,p_t$. Therefore, the simplest covering system $\{0(2),1(2)\}$ can be applied in some conditions.\par
\begin{example}
Find the minimum integer $k$, such that for any $n\in\mathbb{N}$, $k\cdot34^n+1$ is a composite number, and $k\not\equiv2\pmod{3}$, $k\not\equiv10\pmod{11}$.
\begin{solution}
$34^2-1=399=3\times5\times7\times11$. Take prime factors 5 and 3. We have two sets of congruence equations:
\begin{equation}\nonumber
\left\{\begin{array}{c}
k+1\equiv0\pmod{5}\\
34k+1\equiv0\pmod{7}\\
\end{array}\right.\quad and\quad
\left\{\begin{array}{c}
k+1\equiv0\pmod{7}\\
34k+1\equiv0\pmod{5}\\
\end{array}\right.
\end{equation}\par
We obtain $k\equiv29\pmod{35}$ and $k\equiv6\pmod{35}$ respectively, in which $k=6$ is a nontrivial solution. When $k\leq5$, 2 and 5 are trivial solutions. $34^4+1=1336337$, $3\cdot34+1=103$ and $4\cdot34+1=137$ are all primes. Therefore, the minimum Sierpinski number based 34 is 6.  
\end{solution}
\end{example}
If the covering system $\{0(2),1(2)\}$ can not be applied. Then we can cosider about other covering systems which have less congruences such as $\{0(2),1(4),3(4)\}$, $\{1(2),0(4),2(4)\}$ and $\{0(3),1(3),2(3)\}$.\par
Next, taking $m=127$ as an example, a method to find a smaller Sierpinski number is described.\par
First, take small integers $a$ and factorize $127^a-1$. We find that $5\times1613|127^4-1$, $17\times137|127^8-1$, $5419|127^3-1$, and $13\times1231|127^6-1$. Therefore, we can use the covering system $\{0(3),0(4),2(4),1(6),5(6)\}$ with the corresponding primes 5419,5,1613,13,1231. Or we can use the covering system $\{0(3),0(4),1(6),5(6),4(8),0(8)\}$ with the corresponding primes 5419,5,13,1231,17,137.\par
For a certain set of moduli $\{n_s\}_{s=1}^t$, there always exist many covering systems whose remainders are not all the same, that is $\{a_s(n_s )\}_{s=1}^t$ and $\{b_s(n_s )\}_{s=1}^t$ are both covering systems but $(a_1,a_2,\dots,a_t)\neq(b_1,b_2,\dots,b_t)$. We can get an integer $k$ from each of these covering systems. Thus, we need to find covering systems with moduli set $\{n_s\}_{s=1}^t$ as many as possible.\par
A trivial ideal is to go through all possible remainders and leave those are covering systems. However, this method is time-consuming. Let $N$ be the least common multiple of $n_1,n_2,\dots,n_t$. Notice that for any $a,b\in\{0,1,\dots,N-1\}$, $\gcd(a,N)=1$, equation $ax+b\equiv a_s\pmod{n_s}$ has unique solution for every $s\in\{1,2,\dots,t\}$. Thus, if $\{a_s(n_s )\}_{s=1}^t$ is a covering system, then a new covering system can be obtained by repalcing $a_s$ with $a^{-1}(a_s-b)$. (For some sets of $a,b$ there is $a_s\equiv a^{-1}(a_s-b)\pmod{n_s}$ for every $s\in\{1,2,\dots,t\}$, which means this set of $a,b$ can not obtain a new covering system.) Besides, if there is $n_i=n_j$ with $i\neq j$ in the moduli set $\{n_s\}_{s=1}^t$, then  a new covering system can be obtaied by swapping $a_i$ and $a_j$. Sometimes the swap has the same effect as a set of $a,b$, but sometimes not.\par
Combining these two methods, we can obtain a lot of covering systems with moduli set $\{n_s\}_{s=1}^t$. As for the moduli sets $\{3,4,4,6,6\}$ and $\{3,4,6,6,8,8\}$, all covering systems can be obtained by these two methods from initial covering system $\{0(3),0(4),2(4),1(6),5(6)\}$ and  $\{0(3),0(4),1(6),5(6),4(8),0(8)\}$. Use these covering systems to obtain $k$. Then delete trivial solutions. Finally, find the minimum $k$ in nontrivial soutions.\par
For the moduli set $\{3,4,4,6,6\}$, the minimum $k=43429139464$. For the moduli set $\{3,4,6,6,8,8\}$, the minimum $k=11254645362$. $k=11254645362$ is smaller. Its covering system is $\{1(3),1(4),0(6),2(6),3(8),7(8)\}$ with corresponding primes 5419,5,13,1231,17,137.
\section*{Acknowledgments}
I sincerely thank my tutor, professor Yingpu Deng, for introducing me to this problem and Lixia Luo for suggesting me to use cyclotomic polynomial to solve the problem, which simplified the proof a lot. They also helped me in modifying this article.

\end{document}